\title{Global existence of solutions to the fully parabolic chemotaxis system with logistic source under nonlinear Neumann boundary condition}
\author{Minh Le }
\date{\today}
\begin{document}
\maketitle

\begin{abstract}
    We study the existence of global boundedness solutions to the fully parabolic chemotaxis systems with logistic sources, $ru- \mu u^2$, under nonlinear Neumann boundary conditions, $\frac{\partial u}{\partial \nu }=  |u|^p$ where $p >1 $ in smooth bounded domain $\Omega \subset \mathbb{R}^n$ with $n \geq 2$. A recent study by Le (2023) has shown that the logistic sources can ensure that solutions are global and bounded when $n =2$ with $p < \frac{3}{2}$ and $n=3$ with $p <\frac{7}{5}$. In this paper, we extend the previous findings by demonstrating the existence of global bounded solutions when $p< \frac{3}{2}$ in any spatial dimension $n \geq 2$.
\end{abstract}
\numberwithin{equation}{section}
\newtheorem{theorem}{Theorem}[section]
\newtheorem{lemma}[theorem]{Lemma}
\newtheorem{remark}{Remark}[section]
\newtheorem{Prop}{Proposition}[section]
\newtheorem{Def}{Definition}[section]
\newtheorem{Corollary}{Corollary}[theorem]
\allowdisplaybreaks
\section{Introduction}
In this paper, we consider the following PDEs arising from chemotaxis  
\begin{equation} \label{1.1}
    \begin{cases} 
u_{t}=  \Delta u -  \nabla \cdot (u \nabla v) +au -\mu u^2   \qquad &x\in {\Omega},\, t \in (0,T_{\rm max }), \\ 
 \tau  v_t  = \Delta v+ u -  v \qquad &x\in {\Omega},\, t \in (0,T_{\rm max }),
\end{cases} 
\end{equation}
in a smooth, convex, bounded domain $\Omega \subset \mathbb{R}^n$ with $n\geq 2$, where $a, \mu >0$ and $\tau \in \left \{ 0,1 \right \}$. The system \eqref{1.1} is complemented with the nonnegative initial conditions in $C^{2}(\bar{\Omega})$,  not identically zero:
\begin{equation} \label{1.1.2}
    u(x,0) = u_0(x), \qquad v(x,0)= v_0(x), \qquad x\in \Omega,
\end{equation}
and the nonlinear Neumann boundary conditions
\begin{equation} \label{boundary-data}
    \frac{\partial u}{\partial \nu } = |u|^p, \qquad  \frac{\partial v}{\partial \nu }  = 0, \qquad x\in \partial \Omega,\, t \in (0, T_{\rm max}),
\end{equation}
where $p>1$ and $\nu $ is the outward normal vector.\\
Under the homogeneous Neumann boundary conditions, the system \eqref{1.1} when $a=\mu =0$ used to describe the movement of cells toward chemical concentration known as chemotaxis has been first introduced in \cite{Keller}. This system in 2d has an intriguing property known as critical mass, which states that if the total mass is less than a certain value solutions are global and bounded (see \cite{Dolbeault,Dolbeault1,Mizoguchi, NSY,OY} ) while it is greater than that value, solutions blow up in finite time (see  \cite{Nagai1,Nagai2,Nagai4,Nagai3}). In higher dimensional case $n \geq 3$, this property has been proven in \cite{Winkler_2010} to fail to hold, precisely it was proven that finite-time blow-up solutions can be constructed for any given initial mass. In attempt to avoid blow-up phenomenon, the logistic sources, $au -\mu u^2$, introduced in \cite{TW} can guarantee that solutions are globally bounded when $\mu >\frac{n-2}{n}$ with $\tau =0$. The similar result for fully parabolic chemotaxis system when $\tau >0$, was later established in \cite{Winkler-2010}. Additionally, those interested in further exploration of qualitative and quantitative aspects of chemotaxis systems with logistic sources can consult the following references: \cite{Jin+Xiang, Lankeit2, LMLW, Li+Wang, LMZ, Tao+Winkler, MW2011, YCJZ, ZLBZ}.

In 1990s, nonlinear parabolic equations under the nonlinear Neumann boundary conditions has been studied in \cite{CFQ, Quittner1, Quittner}. Particularly, the authors study the global existence and finite-time blow-up solutions of the following equation:
\begin{equation} \tag{NBC}  \label{NBC}
    \begin{cases} 
U_{t}=  \Delta U-\mu U^Q  \qquad &x\in {\Omega},\, t \in (0,T_{\rm max }), \\ 
 \frac{\partial U}{\partial \nu } = U^P \qquad &x\in {\Omega},\, t \in (0,T_{\rm max }),  \\
 U(x,0) = U_0(x)  &x\in {\bar{\Omega}},
\end{cases} 
\end{equation}
where  $\Omega $ is a smooth bounded domain in $\mathbb{R}^n$, $Q,P>1$, $\mu>0$ and $U_0 \in W^{1,\infty}(\Omega)$ is a nonnegative function. It was shown that $P=\frac{Q+1}{2}$ is critical for the blow up in the following sense:
\begin{enumerate}
    \item if $P<\frac{Q+1}{2}$ then all solutions of \eqref{NBC} exist globally and are globally bounded,
    \item If $P>\frac{Q+1}{2}$ (or $P=\frac{Q+1}{2}$ and $\mu $ is sufficiently small ) then there exist initial functions $U_0$ such that the corresponding solutions of \eqref{NBC} blow-up in $L^{\infty}-$norm.
\end{enumerate}
Motivated by those works, \cite{Minh} introduces and studies the chemotaxis systems \eqref{1.1} under the nonlinear Neumann boundary conditions \eqref{boundary-data}.
The main objective of that work is to give an answer to the question: \textit{"What is the largest value $p$ so that logistic damping still avoids blow-up?"} \\
In \cite{Minh}, it was shown that solutions of the parabolic-elliptic system \eqref{1.1} when $\tau =0$  exist globally and remain bounded in time when $p< \frac{3}{2}$. However, in case $\tau =1$, the existence of globally bounded solutions was only shown for $n =2$ when $p<\frac{3}{2}$ and $n=3$ when $p< \frac{7}{5}$ with additional largeness assumption on $\mu$. In this paper, we will show that largest value $p$ so that logistic damping still excludes blow-up is $\frac{3}{2}$ for the fully parabolic system \eqref{1.1} when $\tau =1$ in any spacial dimension. This result is consistent with the results of \eqref{NBC} and \cite{Minh} with $\tau=0$, where critical power is shown to be $P= \frac{Q+1}{2}$ with $Q=2$ in our problem. To be more precise, our main result reads as follows:

\begin{theorem} \label{2dthm}
Let  $\Omega $ be a bounded, convex domain with smooth boundary $\partial \Omega \in C^\infty$ in $\mathbb{R}^n$ with $n \geq 2$. Assume that $\tau=1$, $1<p< \frac{3}{2}$ and initial data $u_0, v_0$ in Holder space $C^{2+\frac{1}{2}}(\Bar{\Omega})$ are nonnegative, not identically zero such that 
\begin{align*}
    \frac{\partial u_0}{\partial \nu } =|u_0|^{p}.
\end{align*}
The system \eqref{1.1} under boundary condition \eqref{boundary-data} possesses a unique nonnegative classical solution $(u,v) \in \left (C^{2+p-1}(\Bar{\Omega} \times [0,\infty)) \right )^2$. Moreover, $u,v>0$ in $\Omega \times(0,\infty)$ and remain bounded in the sense that
\begin{align*}
    \sup_{t >0} \left \{ \left \| u(\cdot,t) \right \|_{L^\infty(\Omega)}+ \left \| v(\cdot,t) \right \|_{W^{1,\infty}(\Omega)} \right \} <\infty.
\end{align*}
\end{theorem}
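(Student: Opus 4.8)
\section*{Proof proposal}

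The plan is to run the standard programme for chemotaxis systems with logistic damping: first establish local existence together with an extensibility criterion, then upgrade a priori bounds on $u$ from $L^1$ to $L^q$ for every finite $q$ and finally to $L^\infty$, at which point the extensibility criterion forces $T_{\max}=\infty$ and one more regularity step yields the uniform-in-time bound and the claimed H\"older class. To begin, I would invoke Amann's theory for quasilinear parabolic systems (or a short-time contraction argument), adapted to the nonlinear flux condition $\partial_\nu u=|u|^p$, to obtain a maximal $T_{\max}\in(0,\infty]$ and a unique classical solution on $[0,T_{\max})$, with the dichotomy that $T_{\max}<\infty$ implies $\limsup_{t\uparrow T_{\max}}\big(\|u(\cdot,t)\|_{L^\infty(\Omega)}+\|v(\cdot,t)\|_{W^{1,\infty}(\Omega)}\big)=\infty$. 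Nonnegativity of $u$ and $v$ follows from the maximum principle (the flux $|u|^p\ge0$ is compatible with $u\ge0$), and strict positivity in $\Omega\times(0,\infty)$ from the strong maximum principle and Hopf's lemma.

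The heart of the matter is the a priori estimate. Integrating the $u$-equation and controlling the boundary contribution $\int_{\partial\Omega}|u|^p$ by the trace inequality against the dissipation and the term $-\mu\int_\Omega u^2$ (here $p<\tfrac32$ already keeps the relevant exponent subcritical), I would first obtain $\sup_{t}\int_\Omega u(\cdot,t)<\infty$ together with the uniform space–time estimate $\int_t^{t+1}\int_\Omega u^2\le C$. Then comes the $L^q$ bootstrap: testing the $u$-equation with $u^{q-1}$ and integrating by parts produces the good dissipation $-\tfrac{4(q-1)}{q^2}\int_\Omega|\nabla u^{q/2}|^2$ and the good absorbing term $-\mu\int_\Omega u^{q+1}$, against which one must control (i) the boundary term $\int_{\partial\Omega}u^{p+q-1}$ coming from $\partial_\nu u=|u|^p$, and (ii) the taxis term, which after integrating by parts using $\partial_\nu v=0$ becomes $-\tfrac{q-1}{q}\int_\Omega u^q\Delta v=-\tfrac{q-1}{q}\int_\Omega u^q(v_t-u+v)$.

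For (i) I would write $\int_{\partial\Omega}u^{p+q-1}=\|u^{q/2}\|_{L^{2(p+q-1)/q}(\partial\Omega)}^{2(p+q-1)/q}$ and estimate it, via the trace inequality combined with the Gagliardo--Nirenberg inequality, by $\varepsilon\|\nabla u^{q/2}\|_{L^2(\Omega)}^2+\varepsilon\|u^{q/2}\|_{L^{2(q+1)/q}(\Omega)}^{2(q+1)/q}+C_\varepsilon$; the assumption $p<\tfrac32$ is precisely what makes the underlying exponent inequality hold for \emph{every} $q$ and \emph{every} $n\ge2$, so these pieces are absorbed by the diffusion and by $\mu\int_\Omega u^{q+1}$ (this is consistent with the critical power $P=\tfrac{Q+1}{2}$, $Q=2$, of \eqref{NBC} and of \cite{Minh}). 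For the genuinely parabolic difficulty in (ii) — the term $-\tfrac{q-1}{q}\int_\Omega u^qv_t$ and the bad power $+\tfrac{q-1}{q}\int_\Omega u^{q+1}$ — rather than arguing $q$ by $q$ at the cost of a smallness restriction on $\mu$, I would run a coupled energy argument: simultaneously estimate $\tfrac{d}{dt}\int_\Omega|\nabla v|^{2s}$, in which the boundary term $\int_{\partial\Omega}|\nabla v|^{2s-2}\partial_\nu|\nabla v|^2$ is $\le0$ by convexity of $\Omega$, obtaining a dissipation $\int_\Omega|\nabla v|^{2s-2}|D^2v|^2$ that controls $\nabla v$ in high $L^r$ norms in terms of $\|u\|_{L^q}$; feeding this back into the cross term and using Young's inequality and interpolation, one closes a system of differential inequalities and obtains $\sup_{t\in(0,T_{\max})}\|u(\cdot,t)\|_{L^q(\Omega)}<\infty$ for all $q<\infty$, uniformly up to $T_{\max}$.

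Having $L^q$ bounds for arbitrarily large $q$, I would pass to $L^\infty$ either by a Moser-type iteration on the $u$-equation (the boundary data is handled since $p-1<\tfrac12$) or by Duhamel's formula for the Neumann heat semigroup with smoothing estimates, the forcing $\nabla\cdot(u\nabla v)+au-\mu u^2$ and the boundary term $|u|^p$ being then bounded in sufficiently high $L^q$; parabolic regularity for the $v$-equation then gives $\sup_{t<T_{\max}}\|v(\cdot,t)\|_{W^{1,\infty}(\Omega)}<\infty$, so the extensibility criterion forces $T_{\max}=\infty$, all bounds being uniform in $t$, and Schauder estimates upgrade the solution to the stated H\"older class. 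I expect the main obstacle to be the interface of steps (i) and (ii): making the boundary trace estimate and the coupled $u$–$\nabla v$ estimate fit together so that the bad cubic-type terms are absorbed for \emph{all} $q$ without any largeness hypothesis on $\mu$, which is exactly the point where this work improves on \cite{Minh}.
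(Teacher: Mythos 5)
Your skeleton (local existence with the extensibility criterion, $L^q$ bootstrap, Moser iteration, Schauder regularity) and your treatment of the boundary term $\int_{\partial\Omega}u^{p+q-1}$ by a trace/Gagliardo--Nirenberg estimate absorbed into the diffusion and into $-\mu\int_\Omega u^{q+1}$ match the paper: Lemma \ref{Boundary-est} is exactly that estimate, and $p<\tfrac32$ enters only there. The gap is in your step (ii), the taxis term. You propose to substitute $\Delta v=v_t-u+v$ and close a coupled system of differential inequalities for $\int_\Omega u^q$ and $\int_\Omega|\nabla v|^{2s}$, using convexity to discard the boundary term in the $\nabla v$ identity. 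That is precisely the method of \cite{Winkler-2010} and of \cite{Minh} for $n=3$, and it is the method this paper explicitly abandons: as discussed in the introduction, the coupled functional already forces $p<\tfrac75$ in three dimensions and its interpolation exponents do not close for $n\ge4$. You correctly identify this interface as ``the main obstacle,'' but you do not supply the idea that overcomes it, so the proposal does not actually reach the claimed conclusion for general $n$.

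The paper's mechanism is different and bypasses $\nabla v$ entirely. It keeps $\Delta v$ as is, applies Young's inequality to get $-(r-1)\int_\Omega u^r\Delta v\le\frac{r\mu}{4}\int_\Omega u^{r+1}+c_4\int_\Omega|\Delta v|^{r+1}$ as in \eqref{Lr.4}, multiplies the resulting differential inequality by an exponential weight and integrates in time, and then invokes the maximal Sobolev regularity estimate for the heat equation (Lemma \ref{l1}) to bound $\int_{t_0}^te^{\frac{(r+1)s}{2}}\int_\Omega|\Delta v|^{r+1}$ by $c_6\int_{t_0}^te^{\frac{(r+1)s}{2}}\int_\Omega u^{r+1}$ plus a constant; the entire $\Delta v$ contribution is then absorbed by the logistic dissipation once $\mu>\frac{4c_6}{r}$. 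This closes for a single $r>n$ in every dimension, after which Moser iteration finishes. Note also that your stated aim of avoiding ``any largeness hypothesis on $\mu$'' is not achieved by the paper either: its proof explicitly requires $\frac{r\mu}{4}>\frac{r-1}{r}$ and $\mu>\frac{4c_6}{r}$, so the absorption of both the bad power and the cross term relies on $\mu$ being large, and nothing in your coupled-functional route indicates how that requirement would be removed.
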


 The key idea of the proof for $\tau =1$ when $n=3$  in \cite{Minh} motivated by \cite{Winkler-2010} is to investigate the energy functional 
\begin{align}
    y(t) = \int_\Omega u^2 +\frac{1}{3} \int_\Omega  u|\nabla v|^2 + \int_\Omega |\nabla v|^4.
\end{align}
This allows us to take advantages of the term $-\mu u^2$, but also raises difficulties on the boundary terms when doing integral by parts. This method works well in $3D$ but requires $p< \frac{7}{5}$, which is far from the critical power $p= \frac{3}{2}$ in the parabolic-elliptic case. However, it cannot be utilized for higher dimensional cases when $n \geq 4$. Instead of using this idea, there is another way to deal with fully parabolic chemotaxis problems \cite{MWS} by only considering the energy functional $\int_\Omega u^p$. This eases the computational significantly since we do not need the deal with the chemical concentration function $v$ directly. This can be done thanks to the help of Lemma \ref{l1}, which enables us to bound $\Delta v$ by $u$ directly. \\
In the following sections, we will briefly recall the local-wellposedness results for system \eqref{1.1} as well as some important estimates in Section \ref{preliminaries}, and prove the main theorem in Section \ref{proof}.

\section{Preliminaries}\label{preliminaries}
Let us commence this section by stating the local existence result established in \cite{Minh}[Theorem 2.2]. 
\begin{lemma} \label{local-existence-theorem}
Assume that $\Omega $ is a open convex bounded domain in $\mathbb{R}^n$, where $n\geq 1$ with smooth boundary $\partial \Omega \in C^\infty$ and nonnegative functions $u_0,v_0$  are in $C^{2+p-1}(\bar{\Omega})$ such that 
\begin{equation}
    \frac{\partial u_0}{\partial \nu} = |u_0|^{p} \qquad \text{ on } \partial \Omega,
\end{equation}
where $p \in (1,2)$. Then there exists $T_{\rm max} \in (0, \infty]$ such that problem \eqref{1.1} under the boundary condition 
\begin{align*}
    \frac{\partial u}{\partial \nu } =|u|^{p}, \qquad \frac{\partial v}{\partial \nu}= 0,  \qquad \text{on } \partial \Omega \times (0,T_{\rm max}),
\end{align*}
admits a unique nonnegative solution $u,v$ in $C^{2+p-1,1+\frac{p-1}{2}}(\bar{\Omega}\times [0,T_{\rm max})$. Moreover, if $u_0, v_0$ are not identically zero in $\Omega$ then $u,v$ are strictly positive in $\Omega \times (0,T_{\rm max})$. If $T_{\rm max}< \infty$, then 
\begin{align}
    \limsup_{t \to T_{\rm max}} \left \{  \left \| u(\cdot,t) \right \|_{L^\infty(\Omega)} + \left \| v(\cdot,t) \right \|_{W^{1,\infty}(\Omega)} \right \} = \infty.
\end{align}
\end{lemma}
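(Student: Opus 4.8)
The plan is to obtain local well-posedness by a Banach fixed-point argument in parabolic H\"older spaces, reducing the coupled system to two linear problems solvable by Schauder theory. Set $\alpha := p-1 \in (0,1)$ and work in $X_T := C^{2+\alpha,\,1+\alpha/2}(\bar\Omega\times[0,T])$. This exponent is forced by the boundary nonlinearity: the map $s\mapsto|s|^p$ is $C^1$ with derivative $p|s|^{p-2}s$ that is $\alpha$-H\"older precisely because $p-1=\alpha<1$, so $s\mapsto|s|^p$ lies in $C^{1+\alpha}_{\mathrm{loc}}$ and no better; this is exactly the regularity the oblique-derivative Schauder estimate needs on the Neumann datum in order to return a $C^{2+\alpha,\,1+\alpha/2}$ solution, and it caps the attainable regularity, which is why the restriction $p<2$ appears.

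First I would fix a pair $(\bar u,\bar v)$ in a closed ball of $X_T\times X_T$ around the time-constant extensions of $(u_0,v_0)$ and solve the two equations in turn. For $v$, the linear problem $\tau v_t=\Delta v+\bar u-v$ with $\partial_\nu v=0$ and $v(\cdot,0)=v_0$ has trivially compatible homogeneous Neumann data, so linear theory (parabolic for $\tau=1$, elliptic for $\tau=0$) yields a unique $v\in X_T$ whose norm is controlled by $\|\bar u\|_{C^{\alpha,\alpha/2}}+\|v_0\|_{C^{2+\alpha}}$. For $u$, I rewrite the drift in non-divergence form, $-\nabla\!\cdot(\bar u\nabla v)=-\nabla\bar u\cdot\nabla v-\bar u\,\Delta v$, so the inhomogeneity $f:=-\nabla\bar u\cdot\nabla v-\bar u\,\Delta v+a\bar u-\mu\bar u^2$ lies in $C^{\alpha,\alpha/2}$, and solve $u_t=\Delta u+f$ with $\partial_\nu u=|\bar u|^p$ and $u(\cdot,0)=u_0$. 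The Neumann datum $|\bar u|^p$ belongs to $C^{1+\alpha,(1+\alpha)/2}$ on the lateral boundary by the composition property above, and the compatibility condition $\partial_\nu u_0=|u_0|^p$ is exactly the standing hypothesis, so Schauder theory returns a unique $u\in X_T$.

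Next I would close the fixed point. The map $\Phi(\bar u,\bar v)=(u,v)$ maps the ball into itself and is a contraction for $T$ small, since the Schauder constants carry a positive power of $T$ and the nonlinear maps $\bar u\mapsto|\bar u|^p$, $\bar u\mapsto\bar u^2$ and the bilinear drift are locally Lipschitz on bounded sets of $X_T$; short time beats these Lipschitz constants. The fixed point solves the system on $[0,T]$, gluing maximal intervals defines $T_{\max}$, and uniqueness throughout follows from the same difference estimate. For positivity I use the non-divergence form $u_t=\Delta u-\nabla v\cdot\nabla u+(a-\Delta v-\mu u)u$ with bounded coefficients: a negative interior minimum is excluded by the strong maximum principle and a negative boundary minimum by Hopf's lemma, which would force $\partial_\nu u<0$ against $\partial_\nu u=|u|^p\ge0$, whence $u\ge0$; then $v\ge0$ follows from its own homogeneous-Neumann maximum principle because the source $u$ is nonnegative, and strict positivity for $t>0$ (given $u_0,v_0\not\equiv0$) is the strong maximum principle once more. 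The extensibility criterion is then standard: were $T_{\max}<\infty$ with $\|u(\cdot,t)\|_{L^\infty}+\|v(\cdot,t)\|_{W^{1,\infty}}$ bounded as $t\uparrow T_{\max}$, parabolic smoothing would bootstrap these bounds into uniform $C^{2+\alpha}$ bounds near $T_{\max}$, and restarting the fixed point from a time just below $T_{\max}$ (the compatibility condition holding automatically for the solution trace) would continue the solution past $T_{\max}$, a contradiction.

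The main obstacle I expect lies at the interface of the nonlinear Neumann condition with the Schauder machinery: one must verify carefully that $|\bar u|^p$ genuinely lies in the anisotropic class $C^{1+\alpha,(1+\alpha)/2}(\partial\Omega\times[0,T])$ — including the time-H\"older seminorm inherited through the boundary trace — and that its dependence on $\bar u$ is Lipschitz on bounded sets, which is what powers the contraction. One must also confirm that only the zeroth-order compatibility condition is required at this regularity, since no higher-order condition is available: matching $\partial_t$ traces through the equation would involve the merely $C^{1+\alpha}$ nonlinearity $|s|^p$, so demanding more would exceed the $C^{2+\alpha}$ ceiling that the boundary term itself imposes.
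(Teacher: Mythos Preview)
The paper does not actually prove this lemma: it is stated with the preface ``the local existence result established in \cite{Minh}[Theorem 2.2]'' and no argument is given in the present paper. So there is no in-paper proof to compare your proposal against.

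That said, your outline is the standard route one would expect the cited reference to take, and it is essentially sound: a Banach fixed-point in the anisotropic H\"older class $C^{2+\alpha,1+\alpha/2}$ with $\alpha=p-1$, decoupling into two linear problems handled by oblique-derivative Schauder theory, then maximum-principle arguments for nonnegativity and the usual continuation alternative. Your identification of why the exponent $\alpha=p-1$ is forced by the $C^{1+\alpha}$ regularity of $s\mapsto|s|^p$, and your remark that only the zeroth-order compatibility $\partial_\nu u_0=|u_0|^p$ is needed at this regularity, are both to the point. Two places deserve a little more care if you flesh this out: first, the claim that ``the Schauder constants carry a positive power of $T$'' is not automatic---one typically subtracts off the time-constant extension of the initial data so that the new unknown starts at zero, which is what produces the $T^\theta$ factor making $\Phi$ a contraction; second, in the positivity step the zero-order coefficient $a-\Delta v-\mu u$ need not have a sign, so the interior-minimum argument should be phrased via the transformation $w=e^{-\lambda t}u$ (or an equivalent device) to reduce to the case of nonnegative zero-order term before invoking the strong maximum principle and Hopf's lemma.
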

The forthcoming lemma concerned with the maximal Sobolev regularity for heat equation serves as a key estimate in the proof of the main result. Interested readers are referred to \cite{WMS}[Lemma 2.3] for details of the proof.
\begin{lemma} \label{l1}
    Assuming that $\Omega \subset \mathbb{R}^n$ with $n \geq 1$,  $ p \in (n, \infty)$, $f \in L^p \left ( (0,T)\times \Omega \right ) $, $g_0 \in W^{2,p}(\Omega)$ and $g$ is a classical solution to the following system
\begin{equation}
    \begin{cases}
     g_t = \Delta g  -  g + f &\text{in } \Omega \times (0,T), \\ 
\frac{\partial g}{\partial \nu} =  0 & \text{on }\partial \Omega \times (0,T),\\ 
 g(\cdot,0)=g_0   & \text{in } \Omega
    \end{cases}
\end{equation}
for some $T\in (0,\infty]$. There exists $C= C(n,p,\Omega)>0$ such that
\begin{align}
    \int_{t_0}^t  e^{\frac{ps}{2}} \int_\Omega |\Delta g|^p\,dx  \, ds \leq C \left ( \int_{t_0}^t e^{\frac{ps}{2}} \int_\Omega |f|^p \,dx \, ds + e^{\frac{pt_0}{2}} \left \| \Delta g(\cdot, t_0) \right \|^p_{L^p(\Omega)}\right )
\end{align}
for all $t \in (t_0, T)$ with $t_0:= \max \left \{ 1, \frac{T}{2}\right \}$.
\end{lemma}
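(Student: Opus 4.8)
The plan is to eliminate the exponential weight by a time--dependent rescaling, which reduces the assertion to the \emph{unweighted} maximal $L^p$ regularity estimate for the Neumann heat operator, and then to treat the initial datum separately by analytic semigroup smoothing. Concretely, I would introduce
\[
h(x,s) := e^{s/2}\,g(x,s), \qquad (x,s)\in\Omega\times(t_0,T),
\]
and differentiate. From $g_s=\Delta g-g+f$ one obtains
\[
h_s=e^{s/2}\Bigl(\tfrac12 g+\Delta g-g+f\Bigr)=\Delta h-\tfrac12 h+e^{s/2}f,
\]
together with $\partial_\nu h=e^{s/2}\partial_\nu g=0$. Since $\Delta h=e^{s/2}\Delta g$ and $h(\cdot,t_0)=e^{t_0/2}g(\cdot,t_0)$, the three quantities in the claimed inequality transform \emph{exactly} into $\int_{t_0}^t\!\int_\Omega|\Delta h|^p$, into $\int_{t_0}^t\!\int_\Omega|e^{s/2}f|^p$, and into $\|\Delta h(\cdot,t_0)\|_{L^p(\Omega)}^p$. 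Writing $F:=e^{s/2}f$ and $h_0:=h(\cdot,t_0)$, it therefore suffices to establish the weight-free bound $\int_{t_0}^t\!\int_\Omega|\Delta h|^p\le C\bigl(\int_{t_0}^t\!\int_\Omega|F|^p+\|\Delta h_0\|_{L^p}^p\bigr)$ for solutions of $h_s=\Delta h-\tfrac12 h+F$ under homogeneous Neumann conditions. The essential gain of the substitution is that the zeroth--order term now carries a favourable sign: the relevant generator is $A:=-\Delta+\tfrac12$ rather than $-\Delta$, so that on $X:=L^p(\Omega)$ with $D(A)=\{w\in W^{2,p}(\Omega):\partial_\nu w=0\}$ it is boundedly invertible with $\sigma(A)\subset[\tfrac12,\infty)$ and $\|A^{-1}\|_{X\to X}\le 2$.

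Next I would invoke the abstract theory of maximal $L^p$ regularity. The Neumann Laplacian on a smooth bounded domain admits bounded imaginary powers and is $R$--sectorial of angle less than $\pi/2$; by the Dore--Venni / Weis characterisation the shifted operator $A=-\Delta+\tfrac12$ consequently enjoys maximal $L^p$ regularity. I then split $h=w+z$, where $z$ solves the homogeneous problem $z_s+Az=0$ with $z(t_0)=h_0$, and $w$ solves $w_s+Aw=F$ with $w(t_0)=0$. For the inhomogeneous part, maximal regularity gives $\|Aw\|_{L^p((t_0,T);X)}\le C\|F\|_{L^p((t_0,T);X)}$, and since $\Delta w=(\tfrac12 A^{-1}-I)(Aw)$ with $\tfrac12 A^{-1}-I$ bounded on $X$, one gets $\|\Delta w(s)\|_{L^p}\le 2\|Aw(s)\|_{L^p}$ pointwise in $s$; hence $\int_{t_0}^t\!\int_\Omega|\Delta w|^p$ is controlled by $\int_{t_0}^t\!\int_\Omega|F|^p$ alone.

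For the homogeneous part I would use the analytic semigroup generated by $-A$. Because $h_0\in D(A)$ (indeed $\partial_\nu h_0=0$ since $g$ satisfies the Neumann condition), one has $z(s)=e^{-(s-t_0)A}h_0\in D(A)$, and the algebraic identity $\Delta=\tfrac12-A$ on $D(A)$ yields
\[
\Delta z(s)=\tfrac12 z(s)-Az(s)=e^{-(s-t_0)A}\Bigl(\tfrac12 h_0-Ah_0\Bigr)=e^{-(s-t_0)A}\,\Delta h_0 .
\]
Here $\Delta h_0\in L^p(\Omega)$ is treated merely as an element of $X$ on which the bounded operator $e^{-(s-t_0)A}$ acts, so no boundary condition on $\Delta h_0$ is required. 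The strict contractivity $\|e^{-\sigma A}\|_{X\to X}\le e^{-\sigma/2}$ then gives $\|\Delta z(s)\|_{L^p}\le e^{-(s-t_0)/2}\|\Delta h_0\|_{L^p}$, and integrating the $p$-th power over $(t_0,\infty)$ produces a finite, $T$--independent constant, whence $\int_{t_0}^t\!\int_\Omega|\Delta z|^p\le \tfrac{2}{p}\,\|\Delta h_0\|_{L^p}^p$. Crucially, this route bounds the initial contribution by $\|\Delta h_0\|_{L^p}$ without any spurious $\|h_0\|_{L^p}$ term, exactly matching the stated right-hand side; the near-constant modes of $h_0$ (for which $\|h_0\|_{L^p}$ may be large) simply have small Laplacian and keep it small under the flow.

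The main obstacle I anticipate is not any of these computations individually but ensuring that the maximal regularity constant and the semigroup decay estimate are \emph{uniform in the time horizon} $T$ (and hence compatible with $t_0=\max\{1,T/2\}\to\infty$). This uniformity is precisely what the shift by $\tfrac12$ buys: since $A$ is invertible with spectrum bounded away from the imaginary axis, it generates an exponentially stable semigroup and has maximal $L^p$ regularity on the half-line $(t_0,\infty)$ with a constant depending only on $n,p,\Omega$, so letting $t$ increase to $T$ causes no blow-up of the constant. I would remark finally that the argument uses only $p\in(1,\infty)$; the hypothesis $p\in(n,\infty)$ in the statement is inherited from the intended application (where $W^{2,p}\hookrightarrow C^1$ is needed) and is not required for the estimate itself.
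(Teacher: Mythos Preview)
The paper does not actually supply a proof of this lemma: it merely states the result and refers the reader to \cite{WMS}, Lemma~2.3, for details. Consequently there is no in-paper argument to compare against. Your proposal is a correct and complete proof along what is in fact the standard route for such weighted maximal regularity statements: the substitution $h=e^{s/2}g$ removes the weight and shifts the generator to $A=-\Delta+\tfrac12$, whose spectral gap yields both maximal $L^p$ regularity on the half-line with a $T$-independent constant and exponential semigroup decay. Your treatment of the initial contribution via $\Delta z(s)=e^{-(s-t_0)A}\Delta h_0$ is clean and avoids the trap of producing an unwanted $\|h_0\|_{L^p}$ term, and your remark that only $p\in(1,\infty)$ is needed for the estimate (the hypothesis $p>n$ being relevant only for the downstream application) is accurate.
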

The following lemma established in \cite{Minh}[Lemma 3.6] allows us to control boundary integrals by using the diffusion and the quadratic degradation. 
\begin{lemma} \label{Boundary-est}
If $r \geq \frac{1}{2}$, $p\in (1,\frac{3}{2})$, and $g \in C^1(\Bar{\Omega}),$ then for every $\epsilon>0$, there exists a positive constant $C=C(\epsilon,\Omega, p,r)$ such that 
\begin{equation} \label{ine-bdr}
    \int_{\partial \Omega} |g|^{p+2r-1} \leq \epsilon \int_{\Omega} |g|^{2r+1} +\epsilon \int_{\Omega} |\nabla g^r|^2 +C.
\end{equation}
\end{lemma}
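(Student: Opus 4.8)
The statement is a trace-type interpolation inequality, and the plan is to trade the boundary integral for interior integrals via the divergence theorem, then absorb the resulting terms with Young's inequality; the hypothesis $p<\frac{3}{2}$ is precisely what makes the absorption possible. Throughout I read $g^r$ as $|g|^r$ and set $\alpha := p+2r-1$. Since $p>1$ and $r\geq \frac{1}{2}$ we have $\alpha>1$, so $\phi:=|g|^{\alpha}\in C^1(\bar{\Omega})$ whenever $g\in C^1(\bar{\Omega})$, with $\nabla\phi = \alpha|g|^{\alpha-1}\,\mathrm{sgn}(g)\,\nabla g$, interpreted as $0$ on the zero set of $g$; this is legitimate because $\alpha-1 = p+2r-2 > 2r-1 \geq 0$.

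First I would fix a vector field $\mathbf{b}\in C^1(\bar{\Omega};\mathbb{R}^n)$ extending the outward unit normal, so that $\mathbf{b}\cdot\nu = 1$ on $\partial\Omega$; such an extension exists because $\partial\Omega$ is smooth. Applying the divergence theorem to $\phi\,\mathbf{b}$ gives
\begin{align*}
\int_{\partial\Omega}|g|^{\alpha} = \int_{\partial\Omega}\phi\,(\mathbf{b}\cdot\nu) = \int_{\Omega}\nabla\cdot(\phi\,\mathbf{b}) = \int_{\Omega}\big(\nabla\phi\cdot\mathbf{b} + \phi\,\nabla\cdot\mathbf{b}\big),
\end{align*}
whence, with $C_1:=\alpha\|\mathbf{b}\|_{L^\infty}$ and $C_2:=\|\nabla\cdot\mathbf{b}\|_{L^\infty}$,
\begin{align*}
\int_{\partial\Omega}|g|^{p+2r-1} \leq C_1\int_{\Omega}|g|^{p+2r-2}|\nabla g| + C_2\int_{\Omega}|g|^{p+2r-1}.
\end{align*}

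Next I would control each interior integral. For the gradient term, I use $|\nabla g^{r}| = r|g|^{r-1}|\nabla g|$ to rewrite $|g|^{p+2r-2}|\nabla g| = r^{-1}|g|^{p+r-1}|\nabla g^{r}|$, where $r\geq\frac{1}{2}$ and $p>1$ keep the exponent $p+r-1>0$. Young's inequality then yields, for any $\delta>0$,
\begin{align*}
C_1\int_{\Omega}|g|^{p+2r-2}|\nabla g| \leq \delta\int_{\Omega}|\nabla g^{r}|^2 + C(\delta)\int_{\Omega}|g|^{2p+2r-2}.
\end{align*}
The decisive point is the comparison of exponents: $2p+2r-2 < 2r+1$ if and only if $p<\frac{3}{2}$. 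Hence under our hypothesis the power $2p+2r-2$ lies strictly below $2r+1$, so a second, pointwise application of Young's inequality, $a^{2p+2r-2}\leq \eta\,a^{2r+1}+C(\eta)$ for $a\geq 0$, together with $|\Omega|<\infty$, gives $\int_{\Omega}|g|^{2p+2r-2}\leq \eta\int_{\Omega}|g|^{2r+1}+C$. The same device handles the lower-order term, since $p+2r-1<2r+1$ (equivalently $p<2$) yields $\int_{\Omega}|g|^{p+2r-1}\leq \eta\int_{\Omega}|g|^{2r+1}+C$.

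Combining these estimates and choosing $\delta$ and $\eta$ small, the coefficients multiplying $\int_{\Omega}|\nabla g^{r}|^2$ and $\int_{\Omega}|g|^{2r+1}$ can each be forced below any prescribed $\epsilon$, which is exactly the claimed inequality. The only genuine obstacle is the exponent bookkeeping in the gradient term: one must route $|\nabla g|$ through $|\nabla g^{r}|$ rather than estimating it directly, and it is at this step that the threshold $p=\frac{3}{2}$ surfaces as the borderline case $2p+2r-2=2r+1$; for $p\geq\frac{3}{2}$ the term $|g|^{2p+2r-2}$ could no longer be absorbed into $\int_{\Omega}|g|^{2r+1}$ and the method would fail. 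The minor technicalities of differentiating $|g|^{\alpha}$ across the zero set of $g$ and of the identity $|\nabla g^{r}| = r|g|^{r-1}|\nabla g|$ when $r<1$ can be justified by the standard regularization $|g|\rightsquigarrow (g^2+\sigma^2)^{1/2}$ followed by letting $\sigma\to 0$.
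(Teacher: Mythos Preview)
Your argument is correct. The paper does not actually prove this lemma; it merely records the statement and cites \cite{Minh}[Lemma 3.6] for the proof. So there is no in-paper argument to compare against.

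That said, your route---extending the outward normal to a smooth vector field, applying the divergence theorem to push the boundary integral into the interior, rewriting $|g|^{p+2r-2}|\nabla g|$ as $r^{-1}|g|^{p+r-1}|\nabla g^{r}|$, and then absorbing via Young's inequality---is the standard mechanism for such trace estimates and is almost certainly what the cited reference does. The exponent bookkeeping is right: the critical comparison $2p+2r-2<2r+1\Leftrightarrow p<\tfrac{3}{2}$ is exactly where the hypothesis enters, and the lower-order term $|g|^{p+2r-1}$ is harmless since $p<2$. Your remarks about interpreting $g^{r}$ as $|g|^{r}$ and about the regularization $(g^{2}+\sigma^{2})^{1/2}$ for the borderline case $r\in[\tfrac{1}{2},1)$ are appropriate; note also that if $\int_{\Omega}|\nabla g^{r}|^{2}=\infty$ the inequality holds trivially, so there is no loss in assuming finiteness throughout.
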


\section{Proof of the main result}\label{proof}
We are now in position to prove our main result.
\begin{proof} [Proof of Theorem \ref{2dthm}]
First, we will show that $u  \in L^\infty \left ( (0, T_{\rm max}); L^r(\Omega) \right )$ for some fixed $r> n$. A direct calculation shows 
\begin{align} \label{Lr.1}
    \frac{1}{r}\frac{d}{dt}\int_{\Omega} u^{r}  &= \int_{\Omega} u^{r-1}u_t \notag\\
    &=  \int_\Omega u^{r-1} \left [ \Delta u - \nabla (u \nabla v) +au-\mu u^2 \right ] \notag \\
    &=-\frac{4(r-1)}{r^2}\int_{\Omega} |\nabla u^{\frac{r}{2}}|^2  -\frac{r-1}{r}\int_{\Omega}u^{r} \Delta v +a\int_{\Omega} u^{r} \notag \\
    &-\mu \int_{ \Omega} u^{r+1}. +\int_{\partial \Omega} u^{r+p-1}\, dS. 
\end{align}
Applying Lemma \ref{Boundary-est} with $\epsilon = \frac{r-1}{r^2}$ yields
\begin{align} \label{Lr.2}
    r \int_{\partial \Omega} u^{r+p-1}\, dS \leq \frac{r-1}{r} \int_\Omega |\nabla u^{\frac{r}{2}}|^2 + \frac{r-1}{r}  \int_\Omega u^{r+1}+c_2,
\end{align}
where $c_2=C(r)>0$. In light of Young's inequality, it follows that
\begin{align}\label{Lr.3}
    (ra+\frac{r+1}{2})\int_{\Omega}u^{r} \leq \frac{r\mu}{4} \int_{ \Omega} u^{r+1} +c_3,
\end{align}
where $c_3=C(r,a, \Omega)>0$.
By Young's inequality, we have
\begin{align}\label{Lr.4}
    -(r-1) \int_\Omega u^r \Delta v \leq \frac{r \mu}{4} \int_\Omega u^{r+1} +c_4 \int_\Omega |\Delta v|^{r+1},
\end{align}
where $c_4 = (r-1)^{r+1} \left ( \frac{2}{\mu (r+1)} \right )^r$. Collecting from \eqref{Lr.1} to \eqref{Lr.4} leads to 
\begin{align}\label{Lr.5}
    \frac{d}{dt} \int_\Omega u^r+ \frac{r+1}{2}\int_\Omega u^r \leq c_4 \int_\Omega |\Delta v|^{r+1}  -\left (\frac{r\mu }{2} -\frac{r-1}{r} \right ) \int_\Omega u^{r+1} +c_5,
\end{align}
where $c_5=c_2+c_3$. Choosing $\mu $ sufficiently large such that $\frac{r\mu}{4} > \frac{r-1}{r}$ and plugging into \eqref{Lr.5} implies
\begin{align}\label{Lr.6}
     \frac{d}{dt} \int_\Omega u^r+ \frac{r+1}{2}\int_\Omega u^r \leq c_4 \int_\Omega |\Delta v|^{r+1}  -\frac{r\mu}{2} \int_\Omega u^{r+1} +c_5.
\end{align}
Multiplying both sides of \eqref{Lr.6} by $e^{(r+1)t}$ and integrating from $t_0$ to $t$ where $t_0 = \min \left \{ 1, \frac{T_{\rm max}}{2} \right \}$, entails
\begin{align}\label{Lr.7}
    \int_\Omega u^r(\cdot,t) &\leq c_4 \int_{t_0}^t e^{-\frac{(r+1)(t-s)}{2}} \int_\Omega |\Delta v(\cdot,s)|^{r+1} - \frac{r\mu}{4} \int_{t_0}^t e^{-\frac{(r+1)(t-s)}{2}} \int_\Omega u^{r+1}(\cdot,s) \notag  \\
    &+ \int_\Omega u^r(\cdot,t_0) +\frac{2c_5}{r+1}.
\end{align}
Applying Lemma \ref{l1} to the first term of the right hand side of \eqref{Lr.7} yields
\begin{align}\label{Lr.8}
    c_4 \int_{t_0}^t e^{-\frac{(r+1)(t-s)}{2}} \int_\Omega |\Delta v(\cdot,s)|^{r+1} &\leq c_6 \int_{t_0} ^t e^{-\frac{(r+1)(t-s)}{2}}\int_\Omega u^{r+1}(\cdot, s) \notag \\
    & + c_6e^{-\frac{(r+1)(t-t_0)}{2}} \int_\Omega |\Delta v(\cdot,t_0)|^{r+1},
\end{align}
where $c_6=C(n,r, \Omega)>0$. From \eqref{Lr.7} and \eqref{Lr.8}, we obtain
\begin{align}
    \int_\Omega u^r(\cdot,t) \leq \left ( c_6 - \frac{r\mu }{4} \right )\int_{t_0} ^t  e^{-\frac{(r+1)(t-s)}{2}}\int_\Omega u^{r+1}(\cdot, s) +c_7,
\end{align}
where $c_7 = c_6e^{-\frac{(r+1)(t-t_0)}{2}} \int_\Omega |\Delta v(\cdot,t_0)|^{r+1} + \int_\Omega u^r(\cdot,t_0) +\frac{2c_5}{r+1}$. Choosing $\mu > \frac{4c_6}{r}$ yields that $ u \in L^\infty \left ( (0, T_{\rm max}); L^r(\Omega) \right )$ for some $r>n$. Applying Moser-iteration procedure established in \cite{Minh}[Lemma 5.2] for nonlinear Neumann boundary conditions implies that $L^\infty \left ( (0, T_{\rm max}); L^\infty (\Omega) \right )$. This, together with Lemma \ref{local-existence-theorem} implies that $T_{\rm max } = \infty$ and $u,v$ are in $C^{2+p-1} \left ( \bar{\Omega} \times [0,\infty) \right )$. The proof is now complete.
\end{proof}

\section*{Acknowledgments}
The author acknowledges support from the Mathematics Graduate Research Award Fellowship at Michigan State University.
\printbibliography

\end{document}